\def\DateTime{09/Octorber/2014, 19:00 (Kyoto)}
\def\Version{Version $1.1$}
\def\yes{\if00}
\def\no{\if01}
\def\iftenpt{\no}
\def\ifelevenpt{\no}
\def\iftwelvept{\yes}
\def\ifquery{\yes}
\theoremstyle{plain}
\newtheorem{Theorem}{Theorem}[section]
\newtheorem{Lemma}[Theorem]{Lemma}
\newtheorem{Corollary}[Theorem]{Corollary}
\theoremstyle{definition}
\newtheorem{Question}[Theorem]{Question}
\newcommand{\ZZ}{{\mathbb{Z}}}
\newcommand{\RR}{{\mathbb{R}}}
\newcommand{\CC}{{\mathbb{C}}}
\newcommand{\PP}{{\mathbb{P}}}
\newcommand{\DD}{{\mathbb{D}}}
\newcommand{\OO}{{\mathcal{O}}}
\newcommand{\LLL}{{\mathscr{L}}}
\newcommand{\MMM}{{\mathscr{M}}}
\newcommand{\XXX}{{\mathscr{X}}}
\newcommand{\YYY}{{\mathscr{Y}}}
\newcommand{\Spec}{\operatorname{Spec}}
\newcommand{\acherncl}{\widehat{{c}}}
\newcommand{\adeg}{\widehat{\operatorname{deg}}}
\newcommand{\rest}[2]{\left.{#1}\right\vert_{{#2}}}
\def\query#1{\setlength\marginparwidth{65pt} %80pt for other size
\marginpar{\raggedright\fontsize{7.81}{9} 
\selectfont\upshape\hrule\smallskip 
#1\par\smallskip\hrule}}
\def\query#1{}
\begin{document}

%%%%%%%%%%%
%% Title %%
%%%%%%%%%%%
\title[Semiample invertible sheaves]%
{Semiample invertible sheaves with semipositive continuous hermitian metrics}
\author{Atsushi Moriwaki}
\address{Department of Mathematics, Faculty of Science,
Kyoto University, Kyoto, 606-8502, Japan}
\email{moriwaki@math.kyoto-u.ac.jp}
\date{\DateTime, (\Version)}
\keywords{semiample metrized, semipositive}
\subjclass[2010]{Primary 14C20; Secondary 32U05, 14G40}
%\thanks{}
\begin{abstract}
Let $(L, h)$ be a pair of a semiample invertible sheaf and a semipositive continuous hermitian metric on a proper algebraic variety. 
In this paper, we prove that $(L, h)$ is semiample metrized, which is a generalization of the question due to S. Zhang. 
\end{abstract}

\maketitle

\section*{Introduction}

Let $X$ be a proper algebraic variety over $\CC$.
Let $L$ be an invertible sheaf on $X$ and let $h$ be a continuous hermitian metric of $L$.
We say that $(L, h)$ is {\em semiample metrized} if,
for any $\epsilon > 0$, there is $n > 0$ such that,
for any $x \in X(\CC)$, we can find $l \in H^0(X, L^{\otimes n}) \setminus \{ 0\}$ with
\[
\sup \left\{ h^{\otimes n}(l,l)(w) \mid w \in X(\CC) \right\}  \leq e^{\epsilon n} h^{\otimes n}(l,l)(x).
\]
In the paper \cite{ZhPos}, Shouwu Zhang proposed the following question:

\begin{Question}{\cite[Question~3.6]{ZhPos}}
If $L$ is ample and $h$ is smooth and semipositive, then does it follow that $(L, h)$ is semiample metrized?
\end{Question}

In \cite[Theorem~3.5]{ZhPos},  he actually gave the affirmative answer in the case where $X$ is smooth over $\CC$.
The purpose of this paper is to give an answer for a generalization of the above question.
First of all, we fix notations:
We say that $L$ is {\em semiample} if there is a positive integer $n_0$ such that $L^{\otimes n_0}$ is generated by
global sections.  Moreover, $h$ is said to be {\em semipositive } (or we say that $(L, h)$ is semipositive) if,
for any point $x \in X(\CC)$ and a local basis $s$ of $L$ on a neighborhood of $x$,
$-\log h(s,s)$ is plurisubharmonic around $x$ (for the definition of plurisubharmonicity on a singular variety,
see Section~\ref{sec:plurisubharmonic}).
Note that $h$ is not necessarily smooth. 
By using the recent work \cite{CGZ} due to Coman, Guedj and Zeriahi,
we have the following answer:

\begin{Theorem}
\label{thm:answer:Zhang:question}
If $L$ is semiample and $h$ is continuous and semipositive,
then $(L, h)$ is semiample metrized.
\end{Theorem}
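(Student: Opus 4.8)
The plan is to reduce the statement to the case of a very ample invertible sheaf on a projective variety and then to use the extension theorem of \cite{CGZ} to transport the construction of peak sections to the smooth ambient space $\PP^N$. Since the definition of semiample metrized requires, for each $\epsilon>0$, only the existence of \emph{one} $n>0$, it is enough to produce such an $n$ among the multiples of the integer $n_0$ for which $L^{\otimes n_0}$ is generated by global sections; after rescaling $\epsilon$ this is harmless, so I may assume $L$ is globally generated. Let $\phi\colon X\to\PP^N$ be the morphism attached to $H^0(X,L^{\otimes n_0})$, let $Y=\phi(X)$ with $A=\OO_Y(1)$, and, passing to the Stein factorization, assume $\phi\colon X\to Y$ has connected fibers, so that $L^{\otimes n_0}=\phi^{*}A$ with $A$ very ample on the projective variety $Y$.

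Next I would descend the metric. If $C$ is a fiber of $\phi$, then $\rest{L^{\otimes n_0}}{C}=\rest{\phi^{*}A}{C}$ is trivial, and $\rest{h^{\otimes n_0}}{C}$ is a continuous semipositive metric on a trivial sheaf over the connected compact fiber $C$; writing it as $e^{-2u}$ in a trivialization, $u$ is plurisubharmonic on $C$, hence constant by the maximum principle for plurisubharmonic functions on a compact connected complex space. Therefore $h^{\otimes n_0}$ is constant along the fibers of $\phi$ and descends to a metric $h_Y$ of $A$, which one checks to be continuous and semipositive. Pulling back sections gives an inclusion $H^0(Y,A^{\otimes m})\hookrightarrow H^0(X,L^{\otimes n_0 m})$ under which the $\sup$-norms over $X$ and over $Y$ agree and the value at $x$ equals the value at $\phi(x)$; hence it suffices to prove that $(A,h_Y)$ is semiample metrized on $Y$, the desired peak sections on $X$ being the pullbacks of those on $Y$.

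The heart of the argument is the case of $Y\subset\PP^N$ with $A=\OO_Y(1)$ very ample and $h_Y$ a continuous semipositive metric. Writing $\varphi$ for the continuous plurisubharmonic weight of $h_Y$, I would apply \cite{CGZ} to extend $\varphi$ to a continuous plurisubharmonic weight $\tilde\varphi$ of $\OO_{\PP^N}(1)$ with $\rest{\tilde\varphi}{Y}=\varphi$. On the \emph{smooth} variety $\PP^N$ the weight $\tilde\varphi$ can be treated by the classical theory: regularizing $\tilde\varphi$ by smooth semipositive weights decreasing uniformly to it and applying Zhang's result \cite[Theorem~3.5]{ZhPos} (equivalently, using the uniform convergence of the sup-norm extremal functions of $\OO(m)$ to $\tilde\varphi$) yields, for any $\epsilon>0$, all large $m$, and every $x\in\PP^N$, a section $t\in H^0(\PP^N,\OO(m))\setminus\{0\}$ peaked at $x$:
\[
\sup_{z\in\PP^N}\lvert t(z)\rvert\,e^{-m\tilde\varphi(z)}\;\le\;e^{\epsilon m}\,\lvert t(x)\rvert\,e^{-m\tilde\varphi(x)}.
\]
For $x\in Y$ I would restrict $s=\rest{t}{Y}$; since $\rest{\tilde\varphi}{Y}=\varphi$ and the supremum over $Y$ is dominated by the supremum over $\PP^N$, the same inequality holds with $Y$ in place of $\PP^N$, while $s(x)=t(x)\ne 0$ guarantees $s\ne 0$. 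This is exactly the peak section required for $(A,h_Y)$, and unwinding the reductions together with a rescaling of $\epsilon$ (to absorb the factor $n_0$ and the squaring in the definition) completes the proof.

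I expect the main obstacle to be the extension step. Producing a plurisubharmonic extension $\tilde\varphi$ across the possibly singular subvariety $Y\subset\PP^N$ that retains enough regularity to run the peak-section construction is precisely what \cite{CGZ} is designed to supply, and it is the only place where the singularities of $X$ (equivalently of $Y$) genuinely intervene; by contrast, the descent of the metric and the passage from $\PP^N$ back to $Y$ are comparatively formal.
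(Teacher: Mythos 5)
Your proposal is correct and follows essentially the same route as the paper: reduce to the ample case via Stein factorization and descent of the metric (the paper's Theorem~2.1 --- the semipositivity of the descended metric on the possibly singular base is the one step you leave as ``one checks''; the paper handles it by lifting analytic discs, Lemma~1.2), then use Coman--Guedj--Zeriahi to replace the continuous semipositive weight on the singular subvariety of $\PP^N$ by smooth semipositive metrics on $\OO_{\PP^N}(1)$ and invoke Zhang's smooth case. The only cosmetic difference is that you first extend the weight and then regularize on $\PP^N$, whereas the paper cites \cite[Corollary~C]{CGZ}, which directly supplies a decreasing sequence of smooth $\omega$-plurisubharmonic functions on $\PP^N$ converging to the given weight on the subvariety.
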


Finally I would like to thank Prof. Zhang for his comments and suggestions.

\section{Plurisubharmonic functions  on singular complex analytic spaces}
\label{sec:plurisubharmonic}
Let $T$ be a reduced complex analytic space.
An upper-semicontinuous function 
\[
\varphi : T \to \RR \cup \{-\infty\}
\]
is said to be {\em plurisubharmonic} if
$\varphi \not\equiv-\infty$ and, for each $x \in T$,
there is an open neighborhood $U_x$ of $x$
together with
an open set $W_{x}$ of $\CC^{n_{x}}$ and a plurisubharmonic function $\Phi_{x}$ on $W_{x}$
such that $U_{x}$ is a closed complex analytic subspace of $W_{x}$ and
$\rest{\varphi}{U_{x}} = \rest{\Phi_{x}}{U_{x}}$.
For an analytic map $f : T' \to T$ of reduced complex analytic spaces and 
a plurisubharmonic function $\varphi$ on $T$,
it is easy to see that
$\varphi \circ f$ is either identically $-\infty$ or plurisubharmonic on $T'$.
By the theorem due to  Fornaess and Narasimhan \cite[Theorem~5.3.1]{FN},
an upper-semicontinuous function $\varphi : T \to \RR \cup \{-\infty\}$ is plurisubharmonic if and only if,
for any analytic map $\varrho : \DD \to T$, 
$\varphi \circ \varrho$ is either identically $-\infty$ or subharmonic on $\DD$,
where $\DD := \{ z \in \CC \mid \vert z \vert < 1 \}$.
Moreover, if $T$ is compact and $\varphi$ is plurisubharmonic on $T$, then
$\varphi$ is locally constant.

Let $\omega$ be a smooth $(1,1)$-form on $T$ such that, for each $x \in T$,
$\omega$ is locally given by $dd^c(u)$ for some smooth function $u$ on a neighborhood of $x$.
Let $\phi$ be a {\em quasi-plurisubharmonic function} on $T$, that is,
for each $x \in T$, $\phi$ can be locally written by the sum of a smooth function and
a plurisubharmonic function around $x$.
We say that $\phi$ is {\em $\omega$-plurisubharmonic}
if there is an open covering $T = \bigcup_{\lambda} U_{\lambda}$ together with
a smooth function $u_{\lambda}$ on $U_{\lambda}$ for each $\lambda$ such that $\rest{\omega}{U_{\lambda}} = dd^c(u_{\lambda})$ and
$\rest{\phi}{U_{\lambda}} + u_{\lambda}$ is plurisubharmonic on $U_{\lambda}$.
The condition for $\omega$-plurisubharmonicity is often denoted by $dd^c([\phi]) + \omega \geq 0$.

Here we consider the following lemma.

\begin{Lemma}
\label{lem:criterion:cont:psh}
Let $f : X \to Y$ be a surjective and proper morphism of algebraic varieties over $\CC$.
Let $\varphi$ be a real-valued function on  $Y(\CC)$. Then we have the following:
\begin{enumerate}
\renewcommand{\labelenumi}{(\arabic{enumi})}
\item
$\varphi$ is continuous if and only if $\varphi \circ f$ is continuous.

\item
We assume that $\varphi$ is continuous.
Then $\varphi$ is plurisubharmonic  if and only if $\varphi \circ f$ is  plurisubharmonic.
\end{enumerate}
\end{Lemma}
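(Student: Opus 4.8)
The plan is to prove both directions for each statement, exploiting the key structural fact that $f$ is surjective and proper. The forward implications in both (1) and (2) are essentially formal: if $\varphi$ is continuous, then $\varphi \circ f$ is continuous because $f$ is continuous in the analytic topology; and if $\varphi$ is plurisubharmonic, then $\varphi \circ f$ is plurisubharmonic (or identically $-\infty$, but since $\varphi$ is real-valued this does not occur) by the functoriality of plurisubharmonicity under analytic maps already recorded in the excerpt. So the content of the lemma lies entirely in the reverse implications, where we must descend a property from $X$ to $Y$.

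For the reverse direction, the main tool I would invoke is the properness and surjectivity of $f$, which makes $f$ a closed, surjective map on the underlying analytic spaces; hence $Y(\CC)$ carries the quotient topology induced by $f$. For (1): suppose $\varphi \circ f$ is continuous. To show $\varphi$ is continuous it suffices to check that for every closed set $C \subseteq \RR$, the preimage $\varphi^{-1}(C)$ is closed in $Y(\CC)$. Now $f^{-1}(\varphi^{-1}(C)) = (\varphi\circ f)^{-1}(C)$ is closed in $X(\CC)$ by continuity of $\varphi \circ f$, and since $f$ is a proper (hence closed) surjective map, $\varphi^{-1}(C) = f\bigl(f^{-1}(\varphi^{-1}(C))\bigr)$ is closed in $Y(\CC)$. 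This gives continuity of $\varphi$. Equivalently, one argues directly that $f$ is a topological quotient map: $\varphi$ is continuous precisely when $\varphi \circ f$ is, which is exactly the universal property of the quotient topology.

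For (2), assuming $\varphi$ is already continuous (so in particular real-valued and finite everywhere), I would use the Fornaess–Narasimhan criterion cited in the excerpt: $\varphi$ is plurisubharmonic if and only if for every analytic map $\varrho : \DD \to Y$, the composite $\varphi \circ \varrho$ is subharmonic on $\DD$ (it cannot be identically $-\infty$ since $\varphi$ is finite). So fix such a $\varrho$. The crucial step is to lift $\varrho$ through $f$: I would argue that, after possibly shrinking or passing to a suitable modification, there is an analytic map $\tilde\varrho$ from a disc (or a finite branched cover of the disc) into $X$ with $f \circ \tilde\varrho = \varrho$. Concretely, one takes the fiber product $\DD \times_Y X$, which is a proper analytic space surjecting onto $\DD$; choosing an irreducible component dominating $\DD$ and normalizing, one obtains a finite branched cover $\pi : \DD' \to \DD$ (with $\DD'$ a disc after further restriction) together with a lift $\tilde\varrho : \DD' \to X$ satisfying $f \circ \tilde\varrho = \varrho \circ \pi$. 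Then $\varphi \circ \varrho \circ \pi = (\varphi \circ f) \circ \tilde\varrho$ is subharmonic by hypothesis (applying Fornaess–Narasimhan to $\varphi \circ f$ on $X$), and subharmonicity of $\varphi \circ \varrho$ on $\DD$ follows because subharmonicity descends through the finite branched cover $\pi$ (a continuous function whose pullback under a nonconstant holomorphic map is subharmonic is itself subharmonic, using the sub-mean-value inequality away from the branch locus and removability of the polar/branch set under continuity).

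The main obstacle I expect is precisely this lifting step in (2): guaranteeing that an arbitrary analytic disc $\varrho : \DD \to Y$ can be lifted into $X$ through the surjective proper map $f$, at least after a finite base change. Surjectivity alone gives set-theoretic lifts of points but not an analytic lift of the whole disc, so one genuinely needs the fiber-product-and-normalization construction to produce a holomorphic (branched) lift, and one must verify that the component chosen actually dominates $\DD$ — this uses that $f$ is surjective and proper so that the projection $\DD\times_Y X \to \DD$ is surjective and proper. The descent of subharmonicity through the branched cover is standard but requires care at the branch points, where I would rely on the continuity of $\varphi$ to control the exceptional set.
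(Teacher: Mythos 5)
Your forward implications and your proof of part (1) are fine; in fact, for (1) the closed-map/quotient-topology argument is a clean alternative to the paper's proof, which instead runs a sequential-compactness argument (lift a sequence $y_n\to y$ to points $x_n$, extract a convergent subsequence by properness, and derive a contradiction). The problem is the lifting step in part (2). You assert that by choosing an irreducible component of $\DD\times_Y X$ dominating $\DD$ and normalizing, one obtains a finite branched cover $\pi:\DD'\to\DD$ together with a lift into $X$. This fails whenever the fibres of $f$ are positive-dimensional, which is exactly the situation the lemma is used in (it is applied to contractions with $f_*\OO_X=\OO_Y$): for instance, if $X=Y\times\PP^1$ and $f$ is the projection, the fibre product is $\DD\times\PP^1$, an irreducible surface equal to its own normalization, and no normalization turns it into a branched cover of $\DD$. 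What you actually need is a \emph{one-dimensional} analytic subset of $X$ (equivalently, of the fibre product) mapping finitely and surjectively onto a germ of $\DD$, and producing it is the genuine content of the step you skipped.

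The paper supplies precisely this missing ingredient: it first reduces to the case where $f$ is projective via Chow's lemma, so that locally $X$ sits as a closed subspace of $Y\times\PP^N$, reduces to the case where $\varrho$ is the normalization of a one-dimensional irreducible germ $(C,y)$, and then cuts $f^{-1}(C)$ down to a one-dimensional germ $(C',x)$ lying over $(C,y)$ by intersecting with hyperplanes of $\PP^N$. Only after this slicing does normalization produce the map $\sigma(z)=z^m$ through which your descent-of-subharmonicity argument applies; that final descent (local biholomorphism off the origin, then removable singularities for subharmonic functions using the continuity of $\varphi$) is correct and coincides with the paper's. As written, though, your construction does not produce the branched cover on which the whole of part (2) rests, so the argument has a gap at its central step.
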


\begin{proof}
(1) It is sufficient to see that if
$\varphi \circ f$ is continuous, then $\varphi$ is continuous.  Otherwise,
there are $y \in Y(\CC)$, $\epsilon_0 > 0$ and a sequence $\{ y_n \}$ on $Y(\CC)$ such that
$\lim_{n\to\infty} y_n = y$ and $\vert \varphi(y_n) - \varphi(y) \vert \geq \epsilon_0$ for all $n$.
We choose $x_n \in X(\CC)$ such that $f(x_n) = y_n$.
As $f : X \to Y$ is proper, we can find a subsequence $\{ x_{n_i} \}$ of $\{ x_n \}$ such that
$x := \lim_{i\to\infty} x_{n_i}$ exists in $X(\CC)$.
Note that 
\[
f(x) = \lim_{i\to\infty} f(x_{n_i}) =  \lim_{i\to\infty} y_{n_i} = y,
\]
so that, as $\varphi \circ f$ is continuous,
\[
\varphi(y) = (\varphi \circ f)(x)  = \lim_{i\to\infty} (\varphi \circ f)(x_{n_i})  = \lim_{i\to\infty} \varphi(f(x_{n_i})) = \lim_{i\to\infty} \varphi(y_{n_i}),
\]
which is a contradiction, so that $\varphi$ is continuous.

\medskip
(2) We need to check that if $\varphi \circ f$ is  plurisubharmonic, then $\varphi$ is  plurisubharmonic.
By using Chow's lemma,  we may assume that $f : X \to Y$ is projective.
Moreover, since the assertion is local with respect to $Y$,
we may further assume that there is a closed embedding $\iota : X \hookrightarrow Y \times \PP^N$ such that
$p \circ \iota = f$, where $p : Y \times \PP^n \to Y$ is the projection to the first factor.
The remaining proof is  same as the last part of the proof of \cite[Theorem~1.7]{DM}.
Let $g : (\DD, 0) \to (Y, y)$ be a germ of analytic map.
By the theorem due to  Fornaess and Narasimhan, it is sufficient to show that $\varphi \circ g$ is
subharmonic.
Clearly we may assume that $g$ is given by the normalization of
a $1$-dimensional irreducible germ $(C, y)$ in $(Y,y)$.
Using hyperplanes in $\PP^N$, we can find $x \in X$ and a $1$-dimensional  irreducible germ $(C', x)$ in $(X,x)$
such that $(C', x)$ is lying over $(C, y)$.
Let $g' : (\DD, 0) \to (X, x)$ be the germ of analytic map given by the normalization of $(C', x)$.
Then we have the analytic map  $\sigma : (\DD, 0) \to (\DD, 0)$ with
$g \circ \sigma =  f \circ g'$:
\[
\begin{CD}
(\DD, 0) @>{g'}>> (X, x) \\
@V{\sigma}VV @VV{f}V \\
(\DD, 0) @>{g}>> (Y, y)
\end{CD}
\]
Changing a variable of $(\DD, 0)$, we may assume that $\sigma$ is given by $\sigma(z) = z^m$ for some positive integer $m$.
Then $\varphi \circ g \circ \sigma$ is subharmonic because
$\varphi \circ f$ is plurisubharmonic.
Therefore,  as $\sigma$ is \'{e}tale over the outside of $0$,
$\varphi \circ g$ is subharmonic on the outside of $0$,  and hence
$\varphi \circ g$ is subharmonic on $(\DD, 0)$ by removable singularities of subharmonic functions.
\end{proof}

\section{Descent of semipositive continuous hermitian metric}
 
In this section, we consider a descent problem of a semipositive continuous hermitian metric.

\begin{Theorem}
\label{thm:descent:semipositive:metric}
Let $f : X \to Y$ be a surjective and proper morphism of algebraic varieties over $\CC$ with $f_*\OO_X = \OO_Y$.
Let $L$ be an invertible sheaf on $Y$.
If $h'$ is a semipositive continuous hermitian metric of $f^*(L)$, then
there is a semipositive continuous hermitian metric $h$ of $L$ such that
$h' = f^*(h)$.
\end{Theorem}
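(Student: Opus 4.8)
The plan is to construct $h$ locally on $Y$ by descending $h'$ fiber by fiber, and then to check that the local pieces transform correctly under changes of frame and that the resulting object is continuous and semipositive by invoking Lemma~\ref{lem:criterion:cont:psh}.

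First I would fix a Zariski open set $V \subseteq Y$ on which $L$ admits a local frame $s$, so that $f^*(s)$ is a frame of $f^*(L)$ over $f^{-1}(V)$. Set $u := -\log h'(f^*(s), f^*(s))$, a real-valued continuous function on $f^{-1}(V)(\CC)$; the semipositivity of $h'$ says precisely that $u$ is plurisubharmonic there. The crucial point is that $u$ is constant along the fibers of $f$. Indeed, fix $y \in V(\CC)$ and let $X_y$ be the fiber equipped with its reduced structure. Since $f$ is proper, $X_y$ is a compact reduced complex analytic space, and since $f_*\OO_X = \OO_Y$ the fiber $X_y$ is connected (this is Stein factorization: the hypothesis forces the finite part of the factorization to be an isomorphism). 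Restricting $u$ along the closed immersion $X_y \hookrightarrow f^{-1}(V)$ gives a plurisubharmonic function on $X_y$ (its composition with any analytic $\varrho : \DD \to X_y$ is subharmonic by the theorem of Fornaess and Narasimhan, and it is never $-\infty$ because $u$ is real-valued), which by compactness is locally constant and hence, by connectedness, constant.

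Therefore $h'(f^*(s), f^*(s)) = \exp(-u)$ is constant on the fibers of $f$, and as $f$ is surjective it descends to a unique positive function $\psi_s$ on $V(\CC)$ with $\psi_s \circ f = h'(f^*(s), f^*(s))$. Applying Lemma~\ref{lem:criterion:cont:psh}(1) to $\psi_s$ shows $\psi_s$ is continuous, and applying Lemma~\ref{lem:criterion:cont:psh}(2) to $-\log \psi_s$ (whose pullback along $f$ is the plurisubharmonic function $u$) shows $-\log \psi_s$ is plurisubharmonic. Defining $h$ on $V$ by $h(s,s) := \psi_s$ thus yields a continuous semipositive candidate metric there.

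Finally I would verify that these local definitions glue. If $s' = g\,s$ for a nowhere-vanishing holomorphic function $g$ on $V \cap V'$ is another frame, then $f^*(s') = (g \circ f)\, f^*(s)$, so $\psi_{s'} \circ f = h'(f^*(s'), f^*(s')) = |g \circ f|^2\, h'(f^*(s), f^*(s)) = (|g|^2 \psi_s) \circ f$; surjectivity of $f$ then gives $\psi_{s'} = |g|^2 \psi_s$, which is exactly the transformation law that a hermitian metric must satisfy. Hence the $\psi_s$ assemble into a well-defined continuous semipositive hermitian metric $h$ of $L$, and by construction $f^*(h)(f^*(s), f^*(s)) = \psi_s \circ f = h'(f^*(s), f^*(s))$, i.e. $f^*(h) = h'$. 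The main obstacle is the fiberwise constancy of $u$: it rests on knowing that the reduced fibers are compact and connected, so that the compactness criterion of Section~\ref{sec:plurisubharmonic} applies, and on the fact that plurisubharmonicity survives restriction to the singular, possibly reducible, fibers.
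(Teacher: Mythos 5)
Your proposal is correct and follows essentially the same route as the paper's proof: show that the potential of $h'$ is plurisubharmonic, hence constant, on the compact connected fibers (connectedness coming from $f_*\OO_X = \OO_Y$), descend it to $Y$, and then obtain continuity and semipositivity of the descended metric from parts (1) and (2) of Lemma~\ref{lem:criterion:cont:psh}. The only cosmetic difference is that you work with local frames and verify the gluing cocycle, whereas the paper fixes a global continuous background metric $h_0$ on $L$ and descends the single function $\phi$ with $h' = \exp(\phi) f^*(h_0)$, which makes the gluing step unnecessary.
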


\begin{proof}
Let $h_0$ be a continuous hermitian metric of $L$ on $Y$.
There is a continuous function $\phi$ on $X(\CC)$ such that $h' = \exp(\phi) f^*(h_0)$.
Let $F$ be a  subvariety of $X$  such that $F$ is an irreducible component of a fiber of $f : X \to Y$.
Then, as
\[
\rest{(f^*(L), h')}{F} \simeq (\OO_{F}, \exp(\rest{\phi}{F})),
\]
we can see that $-\rest{\phi}{F}$ is plurisubharmonic, so that
$\rest{\phi}{F}$ is constant.
Therefore, for any point $y \in Y(\CC)$, $\rest{\phi}{\mu^{-1}(y)}$ is constant
because $\mu^{-1}(y)$ is connected, and hence
there is a function $\psi$ on $Y(\CC)$ such that $\psi  \circ f = \phi$.
By (1) in Lemma~\ref{lem:criterion:cont:psh}, $\psi$ is continuous, so that,
if we set $h := \exp(\psi)h_0$, then
$h$ is continuous on $Y(\CC)$ and $h' = f^*(h)$.

Finally let us see that $h$ is semipositive. 
As it is a local question on $Y$,
we may assume that there is a local basis $s$ of $L$ over $Y$.
If we set $\varphi = - \log h(s,s)$, then
$\varphi \circ f$ is plurisubharmonic because $h'$ is semipositive.
Therefore, by (2) in Lemma~\ref{lem:criterion:cont:psh},
$\varphi$ is plurisubharmonic, as required
\end{proof}

\section{The proof of Theorem~\ref{thm:answer:Zhang:question}}
\renewcommand{\theequation}{\arabic{section}.\arabic{Claim}}

In the case where $X$ is smooth over $\CC$, $L$ is ample and $h$ is smooth,
this theorem was proved by Zhang \cite[Theorem~3.5]{ZhPos}.
First we assume that $L$ is ample. Then there are a positive integer $n_0$ and 
a closed embedding $X \hookrightarrow \PP^N$ such that $\rest{\OO_{\PP^N}(1)}{X} \simeq L^{\otimes n_0}$.
Let $h_{FS}$ be the Fubini-Study metric of $\OO_{\PP^n}(1)$.
Let $\phi$ be the continuous function on $X(\CC)$ given by $h^{\otimes n_0} = \exp(-\phi)\rest{h_{FS}}{X}$.
We set $\omega = c_1(\OO_{\PP^N}(1), h_{FS})$.
Then $\phi$ is $\left(\rest{\omega}{X}\right)$-plurisubharmonic.
Therefore, by \cite[Corollary~C]{CGZ},
there is a sequence $\{ \varphi_i \}$ of smooth functions on $\PP^N(\CC)$ with the following properties:
\begin{enumerate}
\renewcommand{\labelenumi}{(\arabic{enumi})}
\item
$\varphi_i$ is $\omega$-plurisubharmonic for all $i$.

\item $\varphi_i \geq \varphi_{i+1}$ for all $i$.

\item For $x \in X(\CC)$, $\lim_{i\to\infty} \varphi_i(x) = \phi(x)$.
\end{enumerate}
Since $X$ is compact and $\phi$ is continuous, (3) implies that the sequence $\{ \varphi_i \}$ converges $\phi$ uniformly on $X(\CC)$.
We choose $i$ such that $| \phi(x) - \varphi_i(x) | \leq \epsilon n_0/2$ for all $x \in X$.
We set $h_i = \exp(-\varphi_i) h_{FS}$.
Then $h_i$ is a semipositive smooth hermitian metric of $\OO_{\PP^N}(1)$.
Therefore, there is a positive integer $n_1$ such that,
for $x \in \PP^N(\CC)$, we can find $l \in H^0(\PP^N, \OO_{\PP^N}(n_1)) \setminus \{ 0 \}$ with
\[
 \sup \left\{ h_i^{\otimes n_1}(l,l)(w) \mid w \in \PP^N(\CC) \right\}
 \leq e^{n_1(\epsilon n_0 /2)} h_i^{\otimes n_1}(l,l)(x).
\]
In particular, if $x \in X(\CC)$, then $l(x) \not= 0$ (so that $\rest{l}{X} \not= 0$) and
\[
\sup \left\{ h_i^{\otimes n_1}(l,l)(w) \mid w \in X(\CC) \right\}
 \leq e^{\epsilon n_0 n_1/2} h_i^{\otimes n_1}(l,l)(x).
\]
Note that 
\addtocounter{Claim}{1}
\begin{equation}
\label{eqn:thm:answer:Zhang:question:01}
h^{\otimes n_0} e^{-\epsilon n_0/2} \leq   h_i  \leq h^{\otimes n_0}
\end{equation}
on $X(\CC)$ because 
$h_i = h^{\otimes n_0}  \exp(\phi - \varphi_i)$ and
$-\epsilon n_0/2 \leq \phi - \varphi_i \leq 0$
on  $X(\CC)$.
Therefore,
\[
\sup \left\{ h^{\otimes n_0 n_1}(l,l)(w) \mid w \in X(\CC) \right\} e^{-n_0 n_1\epsilon/2} 
\leq \sup \left\{ h_i^{\otimes n_1}(l,l)(w) \mid w \in X(\CC) \right\}
\]
and
\[
h_i^{\otimes n_1}(l, l)(x) \leq h^{\otimes n_0 n_1}(l,l)(x),
\]
and hence
\[
\sup \left\{ h^{\otimes n_0 n_1}(l,l)(w) \mid w \in X(\CC) \right\} 
\leq e^{n_1n_0 \epsilon} h^{\otimes n_0n_1}(l, l)(x),
\]
as required.

In general, as $L$ is semiample,
there are a positive integer $n_2$, a projective algebraic variety $Y$ over $\CC$,
a morphism $f : X \to Y$ and an ample invertible sheaf $A$ on $Y$ such that
$f_*\OO_X = \OO_Y$ and $f^*(A) \simeq L^{\otimes n_2}$.
Thus, by Theorem~\ref{thm:descent:semipositive:metric},
there is a semipositive continuous hermitian metric $k$ of $A$ such that
$(f^*(A), f^*(k)) \simeq (L^{\otimes n_2}, h^{\otimes n_2})$. 
Therefore, the assertion of the theorem follows from the previous observation.

\section{A variant of Theorem~\ref{thm:answer:Zhang:question}}
The following theorem is a consequence of  Theorem~\ref{thm:answer:Zhang:question} together with
the arguments in \cite[Theorem~3.3]{ZhPos}.
However, we can give a direct proof using ideas in the proof of Theorem~\ref{thm:answer:Zhang:question}.

\begin{Theorem}
\label{thm:variant}
Let $X$ be a projective algebraic variety over $\CC$.
Let $L$ be an ample invertible sheaf on $X$ and let $h$ be a semipositive continuous hermitian metric of $L$.
Let us fix a reduced subscheme $Y$ of $X$, $l \in H^0(Y, \rest{L}{Y})$ and a positive number $\epsilon$.
Then,  for  the given $X$, $L$, $h$, $Y$, $l$ and $\epsilon$,
there is a positive integer $n_1$ such that, for all $n \geq n_1$,
we can find $l' \in H^0(X, L^{\otimes n})$ with $\rest{l'}{Y} = l^{\otimes n}$ and
\[
\sup \left\{ h^{\otimes n}(l', l')(w) \mid w \in X(\CC) \right\} \leq
e^{n\epsilon} \sup \left\{ h(l, l)(w) \mid w \in Y(\CC) \right\}^n.
\]
\end{Theorem}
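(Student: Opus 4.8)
The plan is to reduce, exactly as in the proof of Theorem~\ref{thm:answer:Zhang:question}, to the case of a smooth semipositive metric, and then to carry out the norm-controlled extension by $L^2$-methods in the smooth case.

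Since $L$ is ample, I would fix a positive integer $n_0$ and a closed embedding $X \hookrightarrow \PP^N$ with $\rest{\OO_{\PP^N}(1)}{X} \simeq L^{\otimes n_0}$; let $h_{FS}$ be the Fubini--Study metric, put $\omega = c_1(\OO_{\PP^N}(1), h_{FS})$, and let $\phi$ be the continuous function on $X(\CC)$ defined by $h^{\otimes n_0} = \exp(-\phi)\rest{h_{FS}}{X}$. As in the earlier proof, $\phi$ is $(\rest{\omega}{X})$-plurisubharmonic, so by \cite[Corollary~C]{CGZ} there is a decreasing sequence $\{\varphi_i\}$ of smooth $\omega$-plurisubharmonic functions on $\PP^N(\CC)$ with $\lim_{i\to\infty}\varphi_i = \phi$ on $X(\CC)$; by compactness of $X$ the convergence is uniform on $X(\CC)$. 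Choose $i$ with $0 \le \phi - \varphi_i \le \epsilon n_0/2$ on $X(\CC)$, set $h_i = \exp(-\varphi_i)h_{FS}$ (a smooth semipositive metric of $\OO_{\PP^N}(1)$), and let $g$ be the smooth semipositive metric of $L$ on $X$ with $g^{\otimes n_0} = \rest{h_i}{X}$. Then \eqref{eqn:thm:answer:Zhang:question:01} gives the sandwich estimate $e^{-\epsilon/2}\,h \le g \le h$ on $X(\CC)$.

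It would then suffice to produce, for all large $n$, an extension $l' \in H^0(X, L^{\otimes n})$ of $l^{\otimes n}$ with
\[
\sup\left\{ g^{\otimes n}(l', l')(w) \mid w \in X(\CC)\right\} \le e^{n\epsilon/2}\bigl(\sup\left\{ g(l, l)(w) \mid w \in Y(\CC)\right\}\bigr)^{n}.
\]
Indeed, from $g \ge e^{-\epsilon/2}h$ this bounds $\sup_{X(\CC)} h^{\otimes n}(l', l')$ by $e^{n\epsilon}\bigl(\sup_{Y(\CC)} g(l,l)\bigr)^{n}$, and $g \le h$ gives $\sup_{Y(\CC)} g(l,l) \le \sup_{Y(\CC)} h(l,l)$, which is exactly the desired estimate. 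For the smooth metric $g$ the extension is the content of the argument of \cite[Theorem~3.3]{ZhPos}: since $L$ is ample and $g$ is semipositive, for $n \gg 0$ the restriction $H^0(X, L^{\otimes n}) \to H^0(Y, \rest{L^{\otimes n}}{Y})$ is surjective, and by the Ohsawa--Takegoshi $L^2$-extension theorem applied to $(L, g)$ together with the positivity coming from the ampleness of $L$ one may choose $l'$ so that
\[
\int_{X(\CC)} g^{\otimes n}(l', l')\, dV \le C \int_{Y(\CC)} g^{\otimes n}(l^{\otimes n}, l^{\otimes n})\, dS \le C' \bigl(\sup\left\{ g(l,l)(w) \mid w \in Y(\CC)\right\}\bigr)^{n},
\]
where $C$ is the extension constant and $C' = C\cdot\vol(Y)$ are independent of $n$.

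Finally, I would invoke a submean-value (Bergman kernel) estimate for holomorphic sections of $(L^{\otimes n}, g^{\otimes n})$ to bound the sup-norm by the $L^2$-norm up to a factor polynomial in $n$: there is $C''$, independent of $n$, with $\sup_{X(\CC)} g^{\otimes n}(l', l') \le C'' n^{\dim X}\int_{X(\CC)} g^{\otimes n}(l', l')\, dV$. Combining the last two displays and taking $n$-th roots, the constants $C'$, $C''$ and the polynomial factor $n^{\dim X}$ all disappear in the limit, so for all $n$ beyond some $n_1$ depending only on the fixed data one obtains the displayed $g$-estimate above; by the reduction this completes the proof. The main obstacle is precisely this smooth-case extension with norm control: the $L^2$-extension step needs strictly positive curvature, which the semipositive $g$ alone does not provide and which is instead supplied by comparing $g^{\otimes n}$ with $g^{\otimes(n-k)}\otimes g_0^{\otimes k}$ for a fixed strictly positive smooth metric $g_0$ of the ample bundle $L$, at a cost that is negligible after taking $n$-th roots; and the passage from the $L^2$-norm to the sup-norm must be controlled uniformly enough that only sub-exponential factors in $n$ intervene.
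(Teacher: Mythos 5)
Your first half is essentially the paper's own argument: the CGZ regularization, the choice of $i$ with $0 \le \phi - \varphi_i \le \epsilon n_0/2$, and the sandwich \eqref{eqn:thm:answer:Zhang:question:01} all appear there verbatim, and your device for restoring strict positivity (comparing $g^{\otimes n}$ with $g^{\otimes(n-k)}\otimes g_0^{\otimes k}$ for a fixed positive $g_0$) is exactly the paper's $t_m = h^{1-1/m}t^{1/m}$ in its treatment of the smooth semipositive case. You genuinely diverge in two places. First, you take the $n_0$-th root $g$ of $\rest{h_i}{X}$ so as to work with a metric on $L$ itself; the paper instead keeps $h_i$ on $\OO_{\PP^N}(1)$ and performs the extension in $\PP^N$, which forces it to handle a divisibility issue ($h_i$ only yields extensions of $l^{\otimes n_0 a}$) by writing $n = n_0(a+a_0)+j$ and multiplying by fixed auxiliary extensions $l_j$ of $l^{\otimes n_0 a_0 + j}$, at the cost of a constant $e^{A}$ absorbed for large $n$. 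Your root trick would eliminate that bookkeeping. Second, for the base case you substitute an Ohsawa--Takegoshi plus Bergman-kernel argument for the paper's citation of \cite[Theorem~2.2]{ZhPos}.

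The gap lies in that second substitution, and it interacts badly with the first. The theorem allows $X$ to be an arbitrary projective variety and $Y$ an arbitrary reduced subscheme; once you replace $h_i$ by its root $g$ on $X$, the norm-controlled extension must be carried out on $X$ itself, which may be singular, from a $Y$ which need not be smooth, equidimensional, or a local complete intersection. The $L^2$-extension theorem and the sub-mean-value estimate $\sup \leq C'' n^{\dim X}\int$ are statements about complex manifolds (and, in the case of extension, about subvarieties cut out with some regularity); neither is an off-the-shelf citation in this generality, and the uniformity in $n$ of the extension constant for such a $Y$ is precisely the hard point you would need to establish. This is exactly why the paper routes the extension through the smooth ambient $\PP^N$ --- where $Y$ is still a reduced subscheme and where \cite[Theorem~2.2]{ZhPos} (whose proof, the paper notes, works for reduced subschemes) applies --- and only then restricts to $X$, accepting the divisibility bookkeeping as the price. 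To repair your argument you would either have to prove an extension theorem with uniform constants on singular $X$ from arbitrary reduced $Y$, or move the extension back to $\PP^N$, in which case you recover the paper's proof.
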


\begin{proof}
In the case where $X$ is smooth over $\CC$ and $h$ is smooth and positive, 
the assertion of the theorem follows from \cite[Theorem~2.2]{ZhPos}, in which
$Y$ is actually assumed to be a subvariety of $X$. However, the proof works well under the assumption that
$Y$ is a reduced subscheme. First of all, let us see the theorem in the case where $X$ is smooth over $\CC$ and 
$h$ is smooth and semipositive.
As $L$ is ample, there is a positive smooth hermitian metric $t$ of $L$ with $t \leq h $.
Let us choose a positive integer $m$ such that $e^{-\epsilon/2} \leq (t/h)^{1/m} \leq 1$ on $X(\CC)$.
If we set $t_m = h^{1 - 1/m}t^{1/m}$, then $t_m$ is smooth and positive, so that,
for a sufficiently large integer $n$,
there is $l' \in H^0(X, L^{\otimes n})$ such that $\rest{l'}{Y} = l^{\otimes n}$ and
\[
\sup \left\{ t_m^{\otimes n}(l', l')(w) \mid w \in X(\CC) \right\} \leq
e^{n\epsilon/2} \sup \left\{ t_m(l, l)(w) \mid w \in Y(\CC) \right\}^n,
\]
and hence the assertion follows because $e^{-\epsilon/2}h \leq t_m \leq h$ on $X(\CC)$.

For a general case, 
we use the same symbols $n_0$, $X \hookrightarrow \PP^N$, $h_{FS}$, $\phi$, $\omega$ and
$\{ \varphi_i \}$ as in the proof of Theorem~\ref{thm:answer:Zhang:question}.
Clearly we may assume that $l \not= 0$. Since $L$ is ample,
if $a_0$ is sufficiently large integer, then, for each $j=0, \ldots, n_0 - 1$, 
there is $l_j \in H^0(X, L^{\otimes n_0 a_0 + j})$ with $\rest{l_j}{Y} = l^{\otimes n_0 a_0 + j}$.
Let us fix a positive number $A$ such that 
\addtocounter{Claim}{1}
\begin{multline}
\label{eqn:thm:variant:01}
\sup \left\{ h^{\otimes n_0 a_0 + j}(l_j, l_j)(w) \mid w \in X(\CC) \right\} \\
\leq
e^{A} \sup \left\{ h(l, l)(w) \mid w \in Y(\CC) \right\}^{n_0 a_0 + j}
\end{multline}
for $j = 0, \ldots, n_0 - 1$. 
We choose $i$ with $| \phi(x) - \varphi_i(x) | \leq \epsilon n_0/2$ for all $x \in X$, and 
we set $h_i = \exp(-\varphi_i) h_{FS}$. 
As $h_i$ is smooth and semipositive, for the given $\PP^N$, $\OO_{\PP^N}(1)$,
$h_i$, $Y$, $l^{\otimes n_0}$ (as an element of $H^0(Y, \rest{\OO_{\PP^N}(1)}{Y})$) and $n_0\epsilon/4$,
there is a positive integer $a_1$ such that the assertion of the theorem holds for all $a \geq a_1$.
We put 
\[
n_1 := n_0 \max\left\{ a_1 + a_0 + 1,\ \frac{4A}{n_0\epsilon} -3a_0 + 1 \right\}.
\]
Let $n$ be an integer with $n \geq n_1$.
If we set $n = n_0 (a+a_0) + j$ ($0 \leq j \leq n_0 - 1$),
then 
\[
a \geq a_1\quad\text{and}\quad a \geq \frac{4A}{n_0\epsilon} - 4a_0,
\]
so that we can find $l'' \in H^0(\PP^N, \OO_{\PP^N}(a))$ with $\rest{l''}{Y} = l^{\otimes n_0 a}$ and
\begin{multline*}
\sup \left\{ h_i^{\otimes a}(l'', l'')(w) \mid w \in \PP^N(\CC) \right\} \\
\leq e^{a (n_0 \epsilon/4)} \sup \left\{ h_i(l^{\otimes n_0}, l^{\otimes n_0})(w) \mid w \in Y(\CC) \right\}^a,
\end{multline*}
which implies
\addtocounter{Claim}{1}
\begin{multline}
\label{eqn:thm:variant:02}
\sup \left\{ h^{\otimes n_0 a}(l'', l'')(w) \mid w \in X(\CC) \right\} \\
\leq
e^{(3/4) n_0 a \epsilon} \sup \left\{ h(l, l)(w) \mid w \in Y(\CC) \right\}^{n_0 a}
\end{multline}
because of \eqref{eqn:thm:answer:Zhang:question:01}. Here we set $l' = l'' \otimes l_j$.
Then, $\rest{l'}{Y} = l^{\otimes n}$ and, using \eqref{eqn:thm:variant:01} and \eqref{eqn:thm:variant:02}, we have
\begin{multline*}
\sup \left\{ h^{\otimes n}(l', l')(w) \mid w \in X(\CC) \right\} \\
\leq 
\sup \left\{ h^{\otimes n_0 a}(l'', l'')(w) \mid w \in X(\CC) \right\} \sup \left\{ h^{\otimes n_0 a_0 + j}(l_j, l_j)(w) \mid w \in X(\CC) \right\} \\
 \leq e^{(3/4) n_0 a \epsilon + A} \sup \left\{ h(l, l)(w) \mid w \in Y(\CC) \right\}^{n},
\end{multline*}
which implies the assertion 
because $(3/4) n_0 a \epsilon + A \leq \epsilon n$.
\end{proof}

\section{Arithmetic application}
As an application of Theorem~\ref{thm:answer:Zhang:question}, 
we have the following generalization of arithmetic Nakai-Moishezon's criterion (c.f. \cite[Corollary~4.8]{ZhPos}).

\begin{Corollary}
\label{thm:arith:Nakai:Moishzon}
Let $\XXX$ be a projective and flat integral scheme over $\ZZ$.
Let $\LLL$ be an invertible sheaf on $\XXX$ such that $\LLL$ is nef on every fiber of $\XXX \to \ZZ$.
Let $h$ be an $F_{\infty}$-invariant semipositive continuous hermitian metric of $\LLL$,
where $F_{\infty}$ is the complex conjugation map $\XXX(\CC) \to \XXX(\CC)$.
If $\adeg(\acherncl_1(\rest{(\LLL, h)}{\YYY})^{\dim \YYY}) > 0$ for all horizontal integral subschemes $\YYY$ of $\XXX$,
then, for an $F_{\infty}$-invariant continuous hermitian invertible sheaf $(\MMM, k)$ on $\XXX$,
$H^0(\XXX, \LLL^{\otimes n} \otimes \MMM)$ has a basis consisting of  strictly small sections
for a sufficiently large integer $n$.
\end{Corollary}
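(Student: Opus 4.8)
The statement is Zhang's arithmetic Nakai--Moishezon criterion \cite[Corollary~4.8]{ZhPos} with the hypothesis on the metric relaxed from smooth to merely continuous. The plan is therefore to rerun Zhang's proof by induction on the Krull dimension $d = \dim\XXX$, observing that the only place where smoothness of $h$ intervenes is through his extension-with-norm-control estimate \cite[Theorem~2.2]{ZhPos}; this is exactly the statement that our Theorem~\ref{thm:variant} upgrades to continuous semipositive metrics. Once that substitution is in place, the bulk of the argument is formal.

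First I would record the geometric and analytic inputs. The positivity $\adeg(\acherncl_1(\rest{(\LLL,h)}{\YYY})^{\dim\YYY}) > 0$, applied to the horizontal subschemes and combined with the relative nefness, gives (as in \cite{ZhPos}) that $\rest{\LLL}{\XXX_\QQ}$ is ample on the generic fibre; base changing to $\CC$ makes $(\LLL_\CC, h)$ an ample invertible sheaf with continuous semipositive metric on the complex fibre (working component by component if $\XXX_\CC$ is not integral), so Theorem~\ref{thm:answer:Zhang:question} applies and $(\LLL_\CC, h)$ is semiample metrized. This is precisely the hypothesis under which Theorem~\ref{thm:variant} becomes available, and it is what will let me transport strictly small sections across restrictions.

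Then I would carry out the induction. For $d = 1$ the scheme $\XXX$ is an arithmetic curve and the conclusion follows from $\adeg(\acherncl_1(\LLL,h)) > 0$ by Minkowski's theorem. For the inductive step, the arithmetic Hilbert--Samuel formula (valid for nef $\LLL$ with semipositive continuous metric) shows that the positive top self-intersection number makes the number of strictly small sections of $\LLL^{\otimes a}$ grow, so some power of $\LLL$ carries a nonzero strictly small section $s$; let $\YYY = \zeros(s)$, a horizontal divisor, and pass to a reduced irreducible component. Since every horizontal subscheme of $\YYY$ is a horizontal subscheme of $\XXX$, the pair $\rest{(\LLL,h)}{\YYY}$ inherits all the hypotheses, so by induction $\rest{\LLL^{\otimes n}\otimes\MMM}{\YYY}$ has a basis of strictly small sections. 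Using the restriction sequence, I would lift each of these to $\XXX$ with controlled sup-norm by Theorem~\ref{thm:variant}, and complete the family by the sections divisible by $s$, which come from $H^0(\XXX, \LLL^{\otimes(n-a)}\otimes\MMM)$ and are strictly small by induction on $n$ together with $\|s\|_{\sup} < 1$; assembling these yields a $\ZZ$-basis of $H^0(\XXX, \LLL^{\otimes n}\otimes\MMM)$ consisting of strictly small sections. Throughout, $F_\infty$-invariance of all the data lets me choose the sections in the real (i.e. $F_\infty$-invariant) part of each lattice.

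The main obstacle is the quantitative sup-norm bookkeeping in the inductive step: lifting a section from $\YYY$ costs a factor $e^{n\epsilon}$, and one must still land strictly below $1$ after lifting and after multiplying by $s$. This is exactly where the two sources of slack are spent --- the freedom to take $\epsilon$ arbitrarily small in Theorem~\ref{thm:variant}, and the strictly positive self-intersection number, which via arithmetic Hilbert--Samuel provides sections that are small with an exponential (in $n$) margin rather than merely $<1$. Verifying that these margins dominate the accumulated $e^{n\epsilon}$ losses, uniformly as the induction descends through the finitely many steps, is the delicate part; the non-smoothness of $\XXX$ and the singularities of the $\YYY$ cause no extra trouble, since Theorem~\ref{thm:variant} and Theorem~\ref{thm:answer:Zhang:question} are already formulated for singular varieties and reduced subschemes.
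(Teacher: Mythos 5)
Your overall architecture matches the paper's: establish that $L$ is ample on the generic fibre, invoke Theorem~\ref{thm:answer:Zhang:question} to get that $(L,h)$ is semiample metrized, and then run the induction of \cite[Theorem~4.2]{ZhPos} with the extension estimate upgraded to continuous semipositive metrics. The paper simply verifies the hypotheses of Zhang's Theorem~4.2 and cites its arguments, whereas you unpack the induction; that is legitimate, and your identification of Theorem~\ref{thm:variant} as the substitute for \cite[Theorem~2.2]{ZhPos} is exactly the intended role of that result.

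However, there is one concrete gap in how you source the strictly small sections that drive the induction. You propose to produce the section $s$ with $\zeros(s)=\YYY$ from ``the arithmetic Hilbert--Samuel formula (valid for nef $\LLL$ with semipositive continuous metric)''. That formula, in the generality of continuous (non-smooth) semipositive metrics in the psh sense on possibly singular arithmetic varieties, is not available off the shelf and is not established anywhere in this paper; justifying it would itself require the approximation machinery of \cite{CGZ}, so you would be assuming something at least as deep as what you are trying to prove. The paper avoids this entirely: the hypothesis $\adeg(\acherncl_1(\rest{(\LLL,h)}{\YYY})^{\dim\YYY})>0$ is fed into the arithmetic bigness criterion \cite[Theorem~6.6.1]{MoArZariski}, which directly yields a nonzero strictly small section of $\rest{\LLL^{\otimes n_0}}{\YYY}$ for every horizontal $\YYY$ --- precisely the input Zhang's induction needs at every stage. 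The same bigness statement, read on the generic fibre, gives $\deg(L^{\dim Y}\cdot Y)>0$ for every subvariety $Y$ and hence ampleness of $L$ via Nakai--Moishezon; your derivation of ampleness ``from positivity and relative nefness as in Zhang'' glosses over this step, which does require an argument since the corollary only assumes fibrewise nefness. With those two substitutions (arithmetic bigness in place of Hilbert--Samuel, and the explicit Nakai--Moishezon step), your induction scheme and the norm bookkeeping you describe are the content of \cite[Theorem~4.2]{ZhPos} and would go through.
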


\begin{proof}
Let $X$ be the generic fiber of $\XXX \to \Spec(\ZZ)$ and let $Y$ be a subvariety of $X$.
Let $\YYY$ be the Zariski closure of $Y$ in $\XXX$.
As 
\[
\adeg(\acherncl_1(\rest{(\LLL, h)}{\YYY})^{\dim \YYY}) > 0,
\]
$\rest{(\LLL, h)}{\YYY}$ is big by \cite[Theorem~6.6.1]{MoArZariski}, so that 
$H^0(\YYY, \rest{\LLL^{\otimes n_0}}{\YYY}) \setminus \{ 0 \}$ has a strictly small section for
a sufficiently large integer $n_0$.
Moreover, if we set $L = \rest{\LLL}{X}$, then $\rest{L}{Y}$ is big, and hence $\deg(L^{\dim Y} \cdot Y) >0$ because $L$ is nef.
Therefore, $L$ is ample by Nakai-Moishezon's criterion for ampleness.
In particular, by Theorem~\ref{thm:answer:Zhang:question}, $h$ is semiample metrized.
Thus the assertion follows from the arguments in \cite[Theorem~4.2]{ZhPos}.
\end{proof}


\begin{thebibliography}{99}
\bibitem{CGZ}
D. Coman, V. Guedj and A. Zeriahi,
Extension of plurisubharmonic functions with growth control,
J. reine angew Math., {\bf 676} (2013), 33--49.

\bibitem{DM}
J.-P. Demailly,
Mesures de Monge-Amp\`{e}re et caract\'{e}risation g\'{e}om\'{e}trique
des vari\'{e}t\'{e}s alg\'{e}briques affines,
M\'{e}moires de la S. M. F., {\bf 19} (1985), 1-125.


\bibitem{FN}
J. E. Fornaess and R. Narasimhan,
The Levi problem on complex space with singularities,
Math. Ann., {\bf 248} (1980), 47--72.

\bibitem{MoArZariski}
A. Moriwaki,
Zariski decompositions on arithmetic surfaces, 
Publ. Res. Inst. Math. Sci. {\bf 48} (2012), 799-898.

\bibitem{ZhPos}
S. Zhang, Positive line bundles on arithmetic varieties,
J. of AMS, {\bf 8} (1995), 187-221.
\end{thebibliography}
\end{document}